\renewcommand{\@biblabel}[1]{\quad#1.}
\date{}
\newcommand{\be}{\begin{equation}}
\newcommand{\ee}{\end{equation}}
\newcommand{\bea}{\begin{eqnarray}}
\newcommand{\eea}{\end{eqnarray}}
\newcommand{\rme}{{\rm{e}}}
\newcommand{\iGAi}{{i}}
\newcommand{\iGAj}{{j}}
\newcommand{\reversion}[1] { {#1}^{\dagger} }
\newcommand{\cliffconj}[1] { \bar{#1} }
\newtheorem*{defn}{Definition}
\newtheorem*{thm}{Theorem}
\begin{document}

\begin{flushleft}
{\Large
\textbf{Functions of multivector variables}
}
\\
James M.~Chappell$^{1,\ast}$, 
Azhar Iqbal$^{2}$, 
Lachlan J.~Gunn$^{3}$,
Derek Abbott$^{4}$
\\
\bf{1} James M.~Chappell, School of Electrical and Electronic Engineering, University of Adelaide, SA
5005, Australia
\\
\bf{2} Azhar Iqbal, School of Electrical and Electronic Engineering, University of Adelaide, SA
5005, Australia
\\
\bf{3} Lachlan J.~Gunn, School of Electrical and Electronic Engineering, University of Adelaide, SA
5005, Australia
\\
\bf{4} Derek Abbott, School of Electrical and Electronic Engineering, University of Adelaide, SA
5005, Australia
\\
$\ast$ E-mail: james.chappell@adelaide.edu.au
\end{flushleft}

\section*{Abstract}
As is well known, the common elementary functions defined over the real numbers can be generalized to act not only over the complex number field but also over the skew (non-commuting) field of the quaternions.  In this paper, we detail a number of elementary functions extended to act over the skew field of Clifford multivectors, in both two and three dimensions. Complex numbers, quaternions and Cartesian vectors can be described by the various components within a Clifford multivector and from our results we are able to demonstrate new inter-relationships between these algebraic systems. One key relationship that we discover is that a complex number raised to a vector power produces a quaternion thus combining these systems within a single equation. We also find a single formula that produces the square root, amplitude and inverse of a multivector over one, two and three dimensions.  Finally, comparing the functions over different dimension we observe that $ C\ell \left (\Re^3 \right ) $ provides a particularly versatile algebraic framework.


\section*{Introduction}

Clifford algebras are associative non-commutative algebras developed by William~K.~Clifford around 1878 building on the exterior algebras developed earlier by Hermann~Grassmann.
Specifically, denoting $ \bigwedge \Re^{n}  $ as the exterior algebra of $ \Re^n $ then we produce the space of multivectors $ \Re \oplus \Re^n \oplus \cdots \oplus \bigwedge^n \Re^n $ denoted by $ C\ell \left (\Re^n \right ) $ with unity 1. These algebras can be either simple, hence isomorphic to matrix algebras over the reals, complex numbers, or quaternions, or semisimple, and hence isomorphic to the direct sum of two matrix algebras over the reals or quaternions~\cite{Lounesto2001,Porteous1995,Hahn1994,Lam1973,Hestenes111}.

In this paper, we firstly describe some general results applicable in $ C\ell \left (\Re^n \right ) $, before exploring the elementary functions based on multivectors in two and three dimensions, which then finally allows us to identify several unifying relationships.  Clifford multivectors form a generalization of the elementary functions over complex and quaternionic numbers~\cite{Pearson2005} that can be recovered as special cases.

\section*{Analysis}

\subsection*{General results for multivectors in $ C\ell \left (\Re^n \right ) $}

Within $ C\ell \left (\Re^n \right ) $ we form a multivector $ \Re \oplus \Re^n \oplus \cdots \oplus \bigwedge^n \Re^n $ that we can write as
\be \label{MnD}
M = A_0 + A_1 + A_{2} + A_{3}  + \cdots + A_n ,
\ee
where $ A_0 \in \Re $, $ A_1 \in \Re^n $,  $ A_2 \in \bigwedge^2 \Re^n , \dots , A_n \in \bigwedge^n \Re^n $.  The following definitions for the general case of multivectors over $ C\ell \left (\Re^n \right ) $ are essentially as found in \cite{Hestenes111}.

\begin{defn}[Grade selection]
We define the grade selection operation $ \langle M \rangle_k = A_k \in \bigwedge^k \Re^n $.  
The number of elements in each grade $ A_k $ follows the Pascal triangle relation $ \frac{n!}{k! (n-k)!} $ with the $ n + 1 $ grades forming a $ 2^n$-dimensional real vector space.
\end{defn}

\begin{defn}[Orthonormal basis]
A set of orthonormal basis elements $ e_k $ for $ C\ell \left (\Re^n \right ) $, with $ j,k \in \{1,2, \dots , n \} $ satisfy
\be
e_k e_k = 1, e_j e_k = - e_k e_j, j \ne k .
\ee
\end{defn}
For example, in  $ C\ell \left (\Re^3 \right ) $ we have the basis elements $ e_1, e_2 ,e_3 $ forming a multivector $ M = A_0 + A_1 + A_2 + A_3 $ with $ A_0 = a_0 $, $ A_1 = a_1 e_1 + a_2 e_2 + a_3 a_3 $, $ A_2 = a_4 e_1 e_2 + a_5 e_3 e_1 + a_6 e_2 e_3 $ and $ A_3 = a_7 e_1 e_2 e_3 $, where $ a_{0, \dots , 2^n-1} \in \Re $. In order to abbreviate notation we often write $ e_{12} \equiv e_1 e_2 $ and $  e_{123} \equiv e_1 e_2 e_3  $ etc.

\begin{defn}[Multivector involutions]
We define three involutions on a multivector $ M $: firstly {\bfseries space inversion} written as $ M^* $ defined by $ e_k \rightarrow - e_k $, secondly {\bfseries reversion} written as $ \reversion{M} $ that reverses the order of all products, $ e_1 e_2 \cdots e_n \rightarrow e_n e_{n-1} \cdots e_1 $ and thirdly a composition of the first two that forms {\bfseries Clifford conjugation}  written as $ \cliffconj{M} = M^{\dagger * } $. This produces a variation in signs over the different grades as follows
\bea
M^* & = & A_0 - A_1 + A_{2} - A_{3} + A_4 - A_5 + A_6 - A_7 + A_8 \cdots + (-)^n A_n \\ \nonumber
\reversion{M} & = & A_0 + A_1 - A_{2} - A_{3} + A_4 + A_5 - A_6 - A_7  + A_8 + \cdots + (-)^{\left\lfloor n/2 \right \rfloor} A_n \\ \nonumber
\cliffconj{M} & = & A_0 - A_1 - A_{2} + A_{3} + A_4 - A_5 - A_6 + A_7 + A_8 + \cdots + (-)^{n + \left\lfloor n/2 \right \rfloor} A_n\nonumber .
\eea
\end{defn}

Addition and subtraction of multivectors involves adding and subtracting the corresponding terms of the algebra and multiplication is through the formal application of the law of the distribution of multiplication over addition, that is explicated in the sections on two and three dimensional multivectors to follow.
We find that {\it reversion } and Clifford {\it conjugation} are anti-automorphisms producing $ \reversion{(M_1 M_2)} = \reversion{M_2} \reversion{M_1} $ and $ \overline{M_1 M_2} = \cliffconj{M_2} \cliffconj{M_1} $ whereas {\it space inversion} $ \left ( M_1 M_2 \right )^* = M_1^* M_2^* $ is an automorphism.

Note, that using the reversion involution, calculating the corresponding grades in $ M \reversion{M} $ we find that all products are of the form $ e_1 e_2 \cdots e_n e_n e_{n-1} \cdots e_1 = +1 $. Hence we can use the reversion involution to form a positive definite scalar $ \left \langle M \reversion{M} \right \rangle_0 $.  This leads us to define an inner product for multivectors.

\begin{defn}[Inner product]
We define for two multivectors $ M_1 $ and $ M_2 $ the product
\be \label{innerProductN}
\left \langle M_1 \reversion{M_2} \right \rangle_0 = \left \langle M_2 \reversion{M_1} \right \rangle_0 ,
\ee
which can be shown to have the required properties for an inner product. 
This induces a norm on a multivector 
\be
||M||^2 = \left \langle M \reversion{M} \right \rangle_0 ,
\ee
which is positive definite as required.  Conventional results now follow, such as a triangle inequality for multivectors.
\end{defn}

\begin{defn}[Square root]
The square root of a multivector $ Y $ is the multivector $ M $ such that $ Y = M^2 $ and we write $ M = Y^{1/2} $.  
\end{defn}
We reserve the square root symbol $ \sqrt{} $ to act over the reals and complex-like numbers, with its conventional definition, producing a value within the complex-like numbers.

\begin{defn}[Multivector amplitude]
We define the {\it{amplitude}} of a multivector $ M $ as
\be \label{magnitude2D}
|M| = \sqrt{M \cliffconj{M}} .
\ee
Note that $ M \cliffconj{M} $ is not positive definite and does not have a value in the reals in general and hence the amplitude may not exist in all cases.
\end{defn}

\begin{defn}[Multivector exponential]
The exponential of a multivector is defined by constructing the Taylor series 
\be \label{exponentialMultivector}
\rme^M = 1 + M +\frac{M^2}{2!} + \frac{M^3}{3!} + \dots,
\ee
which is absolutely convergent for all multivectors \cite{Hestenes111}.  
\end{defn}
Convergence is easily demonstrated because $ || \langle M^{n} \rangle_k || < || M^n || < || M ||^{n} $, so that if the normed series converges then each grade of the series must converge. The infinite sequence $ \{ M_n \} $ of multivectors $ M_1, M_2, M_3, \dots , M_n, \dots $ approaches the multivector $ L $ as a limit, that is $ M_n \rightarrow L $, if $ ||L - M_n || \rightarrow 0 $ as $ n \rightarrow \infty $.

\begin{defn}[Logarithm]
The logarithm of a multivector is defined as the inverse of the exponential function.  For a given multivector $ Y $ we find $ M $, such that $ Y = \rme^M $ and we write $ M = \log Y $, which is multivalued in general. Hence we have $ \rme^{ \log Y } = Y $. The principal value of the logarithm can be defined as the multivector $ M = \log Y  $ with the smallest norm.  
\end{defn}

In even dimensional spaces $ C\ell \left (\Re^{2 n} \right ) $ the pseudoscalar is non-commuting whereas in odd dimension the pseudoscalar is commuting.  Additionally spaces of dimension $ 2, 3, 6, 7, 10, 11, \cdots $ have a pseudoscalar that squares to minus one whereas $ 4, 5, 8, 9, 12, 13, \cdots $ the pseudoscalar squares to plus one.  Hence spaces that have a commuting pseudoscalar that squares to minus one lie in spaces of dimension $ 3, 7, 11, 15, \cdots, 4 n - 1, \cdots $, where $ n \in \mathbb{N} $.  In general these pseudoscalar properties have period four.

\begin{defn}[Hyperbolic trigonometric functions]
Splitting the exponential series,as shown in Eq.~(\ref{exponentialMultivector}), into odd and even terms we define the hyperbolic trigonometric functions
\bea \label{HyperTrigN}
\cosh M & = & \sum_{n=0}^{\infty}  \frac{M^{2n}}{(2n)!} = \frac{1}{2} \left ( \rme^{ M} +  \rme^{- M} \right )  , \\ \nonumber
\sinh M & = &  \sum_{n=0}^{\infty}  \frac{M^{2n+1}}{(2n+1)!} = \frac{1}{2 } \left ( \rme^{ M} -  \rme^{- M} \right )  .  \nonumber
\eea
The exponential form immediately implies $ \rme^M = \cosh M + \sinh M $ and we can then easily confirm the usual results that $ \sinh 2 M = 2 \sinh M \cosh M $ and $ \cosh^2 M - \sinh^2 M = 1 $.
\end{defn}

\begin{defn}[Trigonometric functions]
We define the trigonometric functions with the alternating series
\be \label{TrigN}
\cos M =  \sum_{n=0}^{\infty}  \frac{(-)^{n} M^{2n}}{(2n)!} , \, \sin M = \sum_{n=0}^{\infty}  \frac{(-)^{n} M^{2n+1}}{(2n+1)!}  . 
\ee
This definition then implies $ \cos^2 M + \sin^2 M = 1 $.
\end{defn}
We can write the trigonometric functions in an exponential form, such as $ \cos M = \frac{1}{2} \left ( \rme^{ J M} +  \rme^{- J M} \right ) $ for example, provided we have a commuting pseudoscalar with $ J^2 = -1 $. This is only true though in spaces of dimension $ 3,7,11,\dots $, as previously discussed.

For the multivector finite series $ S_n = 1 + M + M^2 + \dots + M^n $ we find $ M S_n = M + M^2 + \dots + M^{n+1} $ and so  $ S_n - M S_n = (1 - M ) S_n = 1 - M^{n+1} $.  Multiplying on the left with the inverse of $ (1 - M ) $ we find for the sum
\be
S_n = (1-M)^{-1} \left ( 1 - M^{n+1} \right ) ,
\ee
provided the inverse exists.

\subsection*{Clifford's geometric algebra of two dimensions}

Within Clifford's geometric algebra $ C\ell \left (\Re^2 \right ) $, we form a multivector $ M \in \Re \oplus \Re^2 \oplus \bigwedge^2 \Re^2 $ that can be expressed in terms of an orthonormal basis as
\be \label{M2D}
M = a + x e_1 + y e_2 + b e_{12} ,
\ee
where $ a, x, y, b $ are real scalars and the bivector defined as $ e_{12} = e_1 e_2 $.  

The space of multivectors in $ C\ell \left (\Re^2 \right ) $ is isomorphic to the matrix algebra $ C\ell \left (\Re^2 \right ) \cong {\rm{Mat}}(2,\Re) $.  We have the bivector $ e_{12}^2 = e_1 e_2 e_1 e_2 = - e_1 e_1 e_2 e_2 = -1 $. Hence the even subalgebra\footnote{The subalgebra of $ C\ell_{2} $ spanned by 1 and $ e_{12} $, consisting of scalar and bivector components forming the even subalgebra, with $ e_{12} $ taking the role of the unit imaginary, is isomorphic to $ {\mathbb{C}} $. } in two dimensions, given by $ a + b e_{12} $, is isomorphic to the complex field, and so we can assume the results from complex number theory when the multivector lies within this restricted domain. For example, the $ \log $ of a multivector in the even subalgebra $ \log (a + e_{12} b ) = \log \sqrt{a^2+b^2} + \theta e_{12} $, with the multivalued $ \theta = \arctan( b/a ) $, as found in complex number theory. 
In addition to the even subalgebra representing the complex numbers, we also have the subalgebra $ a + x e_1 $ forming the one-dimensional Clifford algebra $ C\ell \left (\Re^1 \right ) $. 

The sum or difference of two multivector numbers $ M_1 = a_1 + x_1 e_1 + y_1 e_2 + b_1 e_{12} $ and $ M_2 = a_2 + x_2 e_1 + y_2 e_2 + b_2 e_{12} $ is defined by
\be
M_3 = M_1 \pm M_2 = a_1 \pm a_2 +(x_1 \pm x_2 ) e_1 +(y_1 \pm y_2 ) e_2 + (b_1 \pm b_2 ) e_{12} .
\ee
The product $ M_3 $ of multivectors $ M_1 $ and $ M_2 $ is found through the formal application of the distributive law of multiplication over addition
\bea
M_3 = M_1 M_2 & = & a_1 a_2 + x_1 x_2 + y_1 y_2 - b_1 b_2 + (a_1 x_2 + a_2 x_1 + b_1 y_2 - y_1 b_2 ) e_1 \\ \nonumber
& & + (a_1 y_2 + y_1 a_2 + x_1 b_2 - b_1 x_2 ) e_2 + (a_1 b_2 + b_1 a_2 + x_1 y_2 - y_1 x_2 ) e_{12} . \nonumber
\eea

In two dimensions the {\it conjugation} involution produces
\be \label{dagger2D}
\cliffconj{M} = a - x e_1 - y e_2 - b e_{12} .
\ee
In terms of multiplication and additions we can write $ \cliffconj{M} = - \frac{1}{2} \left ( M - e_1 M e_1 - e_2 M e_2 + e_{12} M e_{12}  \right ) $.  We then have the scalar part of a multivector $ \langle M \rangle_0 = \frac{1}{2} \left ( M + \cliffconj{M} \right ) $ and the sum of vector and bivector components $ \langle M \rangle_1 + \langle M \rangle_2 = \frac{1}{2} \left ( M - \cliffconj{M} \right ) $.  If required, we can also isolate the vector components of M as $ \langle M \rangle_1 = \frac{1}{2} \left ( M + e_{12} M e_{12} \right ) = v_1 e_1 + v_2 e_2  $.
Using Clifford conjugation we then find
\bea \label{magsqrd}
M \cliffconj{M} & = & \cliffconj{M}  M \\ \nonumber
 & = & (a + x e_1 + y e_2 + b e_{12} )(a - x e_1 - y e_2 - b e_{12} ) \\ \nonumber
& = & a^2 - x^2 - y^2 + b^2 , \nonumber 
\eea
producing a real number, though not necessarily non-negative.

\begin{defn}[Negative square root]
We define the principal square root of negative numbers in two dimensions as follows: given a real number $ a \in \Re \ge 0 $ we define
\be \label{principalRoot2D}
\sqrt{-a} = e_{12} \sqrt{a} ,
\ee
using the property that the bivector squares to minus one.
\end{defn}

The {\it{amplitude}} of a multivector in two dimensions becomes
\be \label{magnitude2D}
|M| = \sqrt{M \cliffconj{M}} = \sqrt{a^2 - x^2 - y^2 + b^2} .
\ee
Note that that the special case of $ x = y = 0 $ produces the magnitude of a complex-like number.

The {\it reversion } involution in two dimensions produces
\be \label{reversion2D}
\reversion{M} =  a + x e_1 + y e_2 - b e_{12} ,
\ee
which we can also write algebraically as $ \reversion{M} =  \frac{1}{2} \left ( M + e_1 M e_1 + e_2 M e_2 + e_{12} M e_{12}  \right ) $.
We then find the norm of a multivector in two dimensions
\be
|| M || = \left \langle M \reversion{M} \right \rangle_0^{1/2} = \left \langle a^2 + x^2 + y^2 + b^2 + 2 a (x e_1 + y e_2 ) \right \rangle_0^{1/2} =\sqrt {a^2 + x^2 + y^2 + b^2 }.
\ee

Also, this definition of the product and the definition of the amplitude in Eq.~(\ref{magsqrd}) produces the homomorphism
\be \label{norm2D}
| M_1 M_2 |^2 = M_1 M_2 \overline{M_1 M_2} = M_1 M_2 \cliffconj{M_2} \cliffconj{M_1} = |M_1|^2 |M_2|^2 .
\ee
Expanding this expression in full we have
\bea
 & & (a_1^2 - a_2^2 - a_3^2 + a_4^2) (b_1^2 - b_2^2 - b_3^2 + b_4^2) = (a_1 b_1 + a_2 b_2 + a_3 b_3 - a_4 b_4)^2  \\ \nonumber
& &  - (a_1 b_2 + a_2 b_1 - a_3 b_4 + a_4 b_3 )^2 - (a_1 b_3 + a_2 b_4 + a_3 b_1 - a_4 b_2 )^2 + (a_1 b_4 + a_2 b_3 - a_3 b_2 + a_4 b_1   )^2 \nonumber
\eea
and so is a variation of Euler's four-square identity.
It should be noted that  $ |M_1|^2 |M_2|^2 = \pm \left (|M_1| |M_2| \right )^2 $ and taking roots we find that $ |M_1 M_2 | = \pm | M_1 | | M_2 | $.  This is analogous to the result from complex number theory that $ (-1)^{1/2} \times (-1)^{1/2} = \pm (-1 \times -1 )^{1/2} $.

Also, from Eq.~(\ref{magsqrd}) we can see that because $ |M|^2 = M \cliffconj{M} $ is a real scalar, we can define the inverse multivector as
\be \label{inverse2D}
M^{-1} = \cliffconj{M}/|M|^2 .
\ee
This gives $ M M^{-1} = M \cliffconj{M}/|M|^2 = |M|^2/|M|^2 = 1 $ as required.  This now allows us to define the division operation $ M_1/M_2 = M_1 M_2^{-1} $.  Clearly, a multivector fails to have an inverse if $ M \cliffconj{M} = a^2 - x^2 - y^2 + b^2 = 0 $ and so fails to form a division algebra in these cases.  This expression for the inverse is analogous to the formula for the inverse of a complex number $ z^{-1} = \cliffconj{z}/|z|^2 $, that can be recovered as a special case from Eq.~(\ref{inverse2D}) for $ M $ in the even subalgebra.

Now, for more complex manipulations to follow it is preferable to write the general multivector as
\be \label{generalMultivector}
M = a + \mathbf{v} + \iGAi b ,
\ee
where $ \mathbf{v} = x e_1 + y e_2 $ defines a vector, with the bivector $ \iGAi = e_1 e_2 $. We also define $ F = \mathbf{v} + \iGAi b $ so that we can write $ M = a + F $. We have used the symbol $ \iGAi $ for the pseudoscalar\footnote{The pseudoscalar refers to the highest dimensional element of the algebra, which is of dimension $ n $ for a Clifford algebra $ C\ell \left (\Re^n \right ) $.} that is also commonly used for the unit imaginary $ \sqrt{-1} $. This notation is adopted because complex numbers also lie in a two-dimensional space analogous to the even subalgebra of the two-dimensional multivector. We have the important result that $ \mathbf{v}^2 = (x e_1 + y e_2)(x e_1 + y e_2) = x^2 + y^2 $and so a real scalar giving the Pythagorean length.
Hence, using this notation, the condition for a multivector inverse to exist is given by $ a^2 + b^2 \ne \mathbf{v}^2 $.

\subsubsection*{The square root}

The square roots of a multivector in $ C\ell \left(\Re^2 \right ) $ are given by
\be \label{squareRoot2D}
M^{\frac{1}{2}} = \frac{1}{\sqrt{2 (a \pm |M|)}} \left ( M \pm |M| \right ) = \frac{1}{  \sqrt{M + \cliffconj{M} \pm 2 |M|}} ( M \pm |M| ) ,
\ee
provided $ M \notin \Re $.
Proof: Given a multivector $ S = c + \mathbf{w} + \iGAi d $ we find $ S^2 = c^2 + \mathbf{w}^2 - d^2 + 2 c (\mathbf{w} + \iGAi d) $. Hence, provided $ c \ne 0 $ implying vector or bivector components are present, the root of a multivector $ M = a + \mathbf{v} + \iGAi b $ must be of the form $ S = c + \frac{1}{2 c} (\mathbf{v} + \iGAi b ) = \frac{1}{2 c} (2 c^2 + \mathbf{v} + \iGAi b ) = \frac{1}{2 c} (2 c^2 - a + M )  $. It just remains now to find $ c $.  The scalar component of the equation $ S^2 = M $  gives us $ c^2 = \frac{1}{2} (a \pm |M| ) $.  Substituting this expression we find $ S = \frac{1}{\sqrt{2(a \pm |M|)}} (M \pm |M|) $ as required.  However if $ M $ is a pure real number then we need to consider the special case with $ c = 0 $.  This gives $ S^2 = \mathbf{w}^2 - d^2 $ and so provides the roots of scalars.  If we are seeking the square root of a negative real $ -a $, where $ a \ge 0 $, then solving for $ d $  we find the root
\be \label{generalRootScalar}
\sqrt{-a} = \mathbf{v} \pm e_{12} \sqrt{a + \mathbf{v}^2 } ,
\ee
which is satisfied for all vectors $ \mathbf{v} = v_1 e_1 + v_2 e_2 $. The special case with $ \mathbf{v} = 0 $ and $ r = 1 $ produces the principal root defined earlier in Eq.~(\ref{principalRoot2D}). The possible roots of minus one in Clifford multivectors has been further investigated elsewhere as in \cite{Sangwine2006}. For the roots of positive reals we solve instead for the vector length giving $ a^{\frac{1}{2}} = \pm \sqrt{a + d^2} \hat{\mathbf{v}} + e_{12} d  $.  This last expression shows the need to distinguish the square root of reals  given by $ \sqrt{a} $ and the more general square roots over the domain of multivectors shown as $ a^{\frac{1}{2}} $ in order to avoid circular definitions.  The principal values though will correspond with each other.

The square root of a multivector will not exist unless $ | M | $ is real, that is $ \mathbf{v}^2 \leq a^2 + b^2 $, because the pseudoscalar $ \iGAi $ is non-commuting.
For example the square root fails to exist for a pure vector with $ M = \mathbf{v} $ giving $ |M| = \sqrt{-\mathbf{v}^2 } = \iGAi \sqrt{\mathbf{v}^2 } $.  
 We can find from Eq.~(\ref{squareRoot2D}) two roots, each of which though can also be negative, so therefore in general Eq.~(\ref{squareRoot2D}) produces four possible square roots.  The last version on the right has the advantage of being expressed in $ M $ alone and not in components.  For the root with the negative sign to exist we require two extra conditions: $ a > 0 $ and  $ a - |M| > 0 $ or $ \mathbf{v}^2 > b^2 $. 

For the special case of complex-like numbers with a multivector $ z = a + \iGAi b $ we have
\be \label{squareRoot2DComplex}
z^{\frac{1}{2}} = \frac{z \pm |z|}{\sqrt{z + \cliffconj{z} \pm 2 |z|}},
\ee
which agrees with results from complex number theory.

\subsubsection*{Trigonometric form of a multivector}

\begin{defn}[Multivector argument]
We define the argument of a multivector $ M = a + \mathbf{v} + \iGAi b =a + F $ as  
\be
\arg M = {\rm{arctan}} \left ( \frac{|F|}{a} \right ) ,
\ee
where $ | F | = \sqrt{b^2 -\mathbf{v}^2 } $.
The function is multivalued modulo $ 2 \pi $ and also depends on which quadrant the point $ (a,|F|) $ is in. We define the principal value of the argument $ -\pi < \phi \le \pi $.
\end{defn}

\begin{thm}[Trigonometric form]
A two dimensional multivector can be written in the form
\be \label{polarForm2DAssert}
M = a + \mathbf{v} + \iGAi b = \left ( \cos \phi +  \frac{\mathbf{v} + \iGAi b }{\sqrt{b^2 - \mathbf{v}^2 }}  \sin \phi \right ) |M|  = \left ( \cos \phi +  \hat{F}  \sin \phi \right ) |M| ,
\ee
where $ \phi = \arg M $, and we have defined $ \hat{F} = F/|F| $, provided $ |M|, |F| \ne 0 $ . 
\end{thm}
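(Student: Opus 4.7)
The plan is to reduce the identity to the standard planar trigonometric decomposition by showing that $F$ behaves like a pure imaginary quantity, so that $a + F$ plays the role of a complex number of modulus $|M|$ and argument $\phi$.

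First I would compute $F^2$ directly. Writing $F = \mathbf{v} + \iGAi b$ and expanding, the cross terms are $\iGAi b \mathbf{v} + \mathbf{v} \iGAi b$; using the fact that in $C\ell(\Re^2)$ the pseudoscalar anticommutes with vectors ($\iGAi \mathbf{v} = -\mathbf{v} \iGAi$, which follows from $e_{12}e_k = -e_k e_{12}$ for $k=1,2$), these cross terms cancel. Together with $\mathbf{v}^2 = x^2 + y^2$ a real scalar and $\iGAi^2 = -1$, this yields $F^2 = \mathbf{v}^2 - b^2 = -|F|^2$, and hence $\hat{F}^2 = -1$. So $\hat{F}$ behaves as a commuting imaginary unit when multiplied by real scalars.

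Next I would relate $|M|$ to $a$ and $|F|$. From the formula $M\cliffconj{M} = a^2 - \mathbf{v}^2 + b^2$ (Eq.~\ref{magsqrd}) and the hypothesis that $|M|$ exists (so the right side is nonnegative), I get $|M|^2 = a^2 + (b^2 - \mathbf{v}^2) = a^2 + |F|^2$. This is exactly the Pythagorean relation one needs for the trig parametrisation: with $\phi = \arctan(|F|/a)$, one has $\cos\phi = a/|M|$ and $\sin\phi = |F|/|M|$, provided the quadrant of the point $(a,|F|)$ is chosen as specified in the definition of $\arg M$.

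Finally, I substitute into the right-hand side:
\begin{equation*}
\left(\cos\phi + \hat{F}\sin\phi\right)|M| = \frac{a}{|M|}\,|M| + \hat{F}\,\frac{|F|}{|M|}\,|M| = a + |F|\hat{F} = a + F = M,
\end{equation*}
which is what was to be shown. The hypothesis $|M|,|F|\ne 0$ is used exactly to ensure both that $\hat{F} = F/|F|$ is well-defined and that $\arg M$ is a sensible angle.

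The main potential obstacle is the step $F^2 = -|F|^2$: one must verify that no residual vector or bivector piece remains, which relies essentially on the anticommutation of $\iGAi$ with vectors in two dimensions. Everything after that is a straightforward transcription of the complex-number polar form, transported verbatim to the multivector setting using $\hat{F}$ in place of the imaginary unit.
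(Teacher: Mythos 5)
Your computation is correct and, for the main case, is the same substitution argument the paper uses: set $\cos\phi = a/|M|$, $\sin\phi = |F|/|M|$, and observe that the factors of $|F|$ and $|M|$ cancel to return $a + F$. The verification $F^2 = \mathbf{v}^2 - b^2 = -|F|^2$ via anticommutation of $\iGAi$ with vectors is also right (the paper establishes this later, in the exponential-map section).

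There is, however, a scope gap. You write that the existence of $|M|$ forces $M\cliffconj{M} = a^2 - \mathbf{v}^2 + b^2 \ge 0$, and you likewise take $|F| = \sqrt{b^2 - \mathbf{v}^2}$ to be real. Neither restriction is in the theorem: in this paper the square root of a negative real is \emph{defined} (Eq.~(\ref{principalRoot2D})) to be the bivector $e_{12}$ times a real, so $|M|$ and $|F|$ always exist in two dimensions and may be bivector-valued; the theorem claims validity whenever they are merely nonzero. The paper's proof explicitly treats these extra cases ($b^2 < \mathbf{v}^2$, or $M\cliffconj{M} < 0$), arguing that the bivector-valued $|F|$ in the denominator of $\hat{F}$ cancels against the same factor arising in $\sin\phi$, and similarly for $|M|$. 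The repair is essentially the cancellation you already perform, but you must check it survives when $|F|$ or $|M|$ is a bivector --- and here the ordering of factors matters, since $e_{12}$ anticommutes with vectors in $C\ell\left(\Re^2\right)$; this is precisely why the theorem writes $\hat{F}$ with the denominator following the numerator and places $|M|$ on the right. As written, your proof establishes the theorem only on the subdomain where both amplitudes are real.
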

\begin{proof}
Assuming $ b^2 > \mathbf{v}^2 $ and $ |M| \ne 0 $, we have $ \cos \phi = a/|M| $ and $ \sin \phi = \sqrt{b^2 - \mathbf{v}^2}/|M| $. Substituting we find $ M = a + \mathbf{v} + \iGAi b $ as required.  Alternatively if $ b^2 < \mathbf{v}^2 $ then $ \sqrt{b^2 -\mathbf{v}^2 } $ becomes a bivector but because it will cancel with the same term in $ \sin \phi $ the multivector will be returned.  Likewise if $ | M| $ is a bivector, then this will also cancel with $ | M| $ in the $ \sin \phi $ and $ \cos \phi $ terms. Hence Eq.~(\ref{polarForm2DAssert}) applies provided $ |M| \ne 0 $ and $ b^2 \ne \mathbf{v}^2 $, as required.
\end{proof}

The order of the factors in Eq.~(\ref{polarForm2DAssert}) is important because $ \phi $ and $ |M| $ can lie in the even subalgebra and so will not necessarily commute with $  \mathbf{v} + \iGAi b $, in general. Notationally, it is also important to note that we define $ \hat{F} =  \frac{\mathbf{v} + \iGAi b }{\sqrt{b^2 - \mathbf{v}^2 }} \equiv (  \mathbf{v} + \iGAi b )/ \sqrt{b^2 - \mathbf{v}^2} $, where the denominator always follows the numerator, due to commutativity issues. Note, it turns out that we can rearrange the factors to produce an equivalent form $ M = |M| \left ( \cos \phi + \sin \phi (b^2 - \mathbf{v}^2)^{-\frac{1}{2}} (  \mathbf{v} + \iGAi b )  \right ) $. 

Now, assuming the trigonometric form in Eq.~(\ref{polarForm2DAssert}) exists, we find for integer powers $ p $ that
\be \label{Mp2D}
M^p =   \left ( \cos \phi + \hat{F}  \sin \phi \right )^p |M|^p = \left ( \cos p \phi +\hat{F} \sin p \phi \right ) |M|^p ,
\ee
a generalization of de~Moivre's theorem for multivectors, valid for $ | F|, |M | \ne 0 $.

Now, because multivector multiplication is associative we can find the rational  powers $ M^{\frac{p}{2^q}} $, where $ p,q $ are integers.  We will now see how this relation can be written in polar form using the exponential map, which will allow us to calculate more general multivector powers using logarithms.

\subsubsection*{Exponential map of a multivector}

Given a two-dimensional multivector $ a + \mathbf{v} + \iGAi b = a + F $, we find $ F^2 = (\mathbf{v} + \iGAi b)^2 = \mathbf{v}^2 - b^2  $ and so $  | F |^2= F \cliffconj{F} = -F^2  $.  Hence, given the exponential map in Eq.~(\ref{exponentialMultivector}), we find
\bea \label{derivationExpMultivector2D}
\rme^{a + \mathbf{v} + \iGAi b} & = & \rme^{a } \rme^{\mathbf{v} + \iGAi b} = \rme^{a } \rme^{ F }  \\ \nonumber
& = & \rme^{a } \left ( 1 + F - \frac{| F |^2 }{2!}- \frac{ F | F |^2}{3!} + \frac{| F |^4 }{4!} + \dots  \right ) . \nonumber
\eea
If $ | F | = 0 $, then referring to the last line of the derivation above, we see that all terms following $  F $ will be zero, and so, in this case $ \rme^{a + \mathbf{v} + \iGAi b} = \rme^{a } (1+ \mathbf{v} + \iGAi b) $. 
Now, assuming the power series definitions for the trigonometric functions we can then find the closed form
\be
\rme^{a + \mathbf{v} + \iGAi b} = \rme^{a } \left ( \cos | F | + \hat{F}  \sin | F |   \right ) , 
\ee
a result that remains valid even if $ | F | = \sqrt{b^2 - \mathbf{v}^2 } $ is a bivector, because as we know from complex number theory the trigonometric functions will simply become hyperbolic trigonometric functions.

We can thus rearrange this result, to write a multivector in polar form as
\be \label{polarForm2D}
M = a + \mathbf{v} + \iGAi b  =  | M | \rme^{ \hat{F} \phi }  =  | M | \rme^{ (\mathbf{v} + \iGAi b ) /|F| \phi }  ,
\ee
where $ \phi = \arg M $.  We find that an exponential form is only possible if $ |M| $ is real, even though the trigonometric form, shown previously in Eq.~(\ref{polarForm2DAssert}), is valid generally.  This is because $ | \rme^{a + \mathbf{v} + \iGAi b} | = \left (\rme^{a + \mathbf{v} + \iGAi b} \rme^{a - \mathbf{v} - \iGAi b} \right )^{1/2} = e^{a} $, a result that is always real, whereas in general $ | a + \mathbf{v} + \iGAi b | $ can become a bivector. This also explains why the square root fails to exist in these cases. Eq.~(\ref{polarForm2D}) is a generalization of the exponential form for complex numbers. That is, for $ \mathbf{v} = 0 $, we have $ M = a + \iGAi b  =  | M | \rme^{ \hat{F} \phi}  =  | M | \rme^{  \iGAi \phi} = \sqrt{a^2 + b^2 } \left ( \cos \phi + \iGAi \sin \phi \right ) $, where $ \iGAi $ is the bivector, but equivalent to the logarithm for complex numbers. Hence the logarithm of a multivector $ M $ becomes
\be \label{LogMulti2D}
\log M = \log  |M| +  \hat{F } \phi ,
\ee
$ \phi = \arg M $. The logarithm multivaluedness coming from the argument function. 

We can now also define the multivector power
$ M^P = \rme^{ \log ( M ) P } $, where $ P $ is a also general multivector and, due to non-commutativity, alternatively as $ \rme^{ P \log ( M ) } $.

\subsubsection*{Trigonometric functions of a multivector}

In two dimensions, the expressions for the hyperbolic trigonometric functions given in Eq.~(\ref{HyperTrigN}) can be simplified to give
\bea \label{hyperTrig2D}
\cosh M & = &  \frac{1}{2} \left ( \rme^{ a + F} +  \rme^{- a -F} \right ) = \cos |F| \cosh a + \hat{F} \sin |F| \sinh a \\ \nonumber
\sinh M & = & \frac{1}{2} \left ( \rme^{ a + F} -  \rme^{- a -F} \right ) = \cos |F| \sinh a + \hat{F} \sin |F| \cosh a  .\nonumber
\eea
We can view these relations as a generalization of the results for complex numbers.  For example, for complex numbers we have $ \cosh(a+ \iGAi b) = \cos b \cosh a  + \iGAi \sin b \sinh a  $, whereas for the case of multivectors we can write $ \cosh ( a + \mathbf{v} + \iGAi b ) = \cosh ( a + F) = \cosh ( a + \hat{F} |F|) $, and so produce the results of Eq.~(\ref{hyperTrig2D}), where $ \hat{F} $ now takes the role of the unit imaginary, because $ \hat{F}^2 = -1 $.  These results also remaining valid if $ |F| $ is a bivector.

Now, because the pseudoscalar $ \iGAi $ in two dimensions is not commuting there is no way to generate the alternating series shown in Eq.~(\ref{TrigN}) for the trigonometric functions from the exponential series using the pseudoscalar and so these will be developed in the next section in three dimensions.

Our complete list of results for multivectors in $ C\ell\left(\Re^2\right) $ are tabulated in Table~\ref{twoDFunctions}. The inverse hyperbolic trigonometric functions are also shown in Table~\ref{twoDFunctions}, using the algebraic procedure shown next in three dimensions.
In conclusion, we have identified several limitations in two dimensions, such as the lack of a commuting pseudoscalar, the nonexistence of the square root and exponential representation in a significant class of multivectors, however, we now produce the corresponding expressions with multivectors in the more general three-dimensional space where these limitations are absent.

\section*{The multivector in three dimensions}

In three dimensions we have the three basis elements $ e_1, e_2 $ and $ e_3 $, the three bivectors $ e_1 e_2 $, $ e_3 e_1 $ and $ e_2 e_3 $, as well as the trivector $ \iGAj = e_1 e_2 e_3 = e_{123} $ and we form the three dimensional geometric algebra $ C\ell \left (\Re^3 \right ) $.  In order to assist the readers intuition we note an isomorphism with matrix algebra that $ C\ell \left (\Re_3 \right ) \cong {\rm{Mat}}(2,C) $.  This isomorphism also implies that Clifford algebra shares the non-commuting and associativity properties of matrix algebra.  However it should be noted that the Clifford algebra we have defined over $ \Re^3 $ has more structure than is the case with the matrix definition, for example, we have a graded structure in $ C\ell \left (\Re_3 \right ) $ of scalars, vectors, bivectors and trivectors.  In three dimensions the trivector squares to minus one and commutes with all quantities and so in close analogy to the unit imaginary.  
Indeed, using the trivector we can also form what are called the dual relations, $ e_1 e_2 = \iGAj e_3 $, $ e_3 e_1 = \iGAj e_2 $ and $ e_2 e_3 = \iGAj e_1 $. Hence, we can write a general multivector in three dimensions as
\be \label{generalM3D}
M = a + \mathbf{v} + \iGAj \mathbf{w} + \iGAj t, 
\ee
where $ \mathbf{v} = v_1 e_1 + v_2 e_2 + v_3 e_3 $ and $ \mathbf{w} = w_1 e_1 + w_2 e_2 + w_3 e_3 $, which thus produces a multivector of eight dimensions.  The Clifford algebra $ C\ell \left ( \Re^3 \right ) $ contains the element $ j = e_{123} $ as a pseudoscalar such that the two dimensional subalgebra generated by $ \iGAj $ is the center $ Z ({\textsl{A}} ) $ of the algebra $ A = C\ell \left ( \Re^3 \right ) $. That is, every element of $ A $ commutes with every element of the centre $ Z (A ) $ that can be represented as $ a + \iGAj t $. Thus $ A $ is isomorphic to an algebra over the complex field. This is in contrast to $ C\ell(\Re^2) $ where the imaginary element $ \iGAi = e_{12} $ is not commuting with other elements of the algebra and so does not belong to the center $ {\rm{Cen}}(C\ell(\Re^2)) $.

Before proceeding to a general multivector product it is instructive to firstly calculate the special case of the product of two vectors $ \mathbf{v} $ and $ \mathbf{w} $.
Assuming the distribution of multiplication over addition we find
\bea \label{geometricProductRaw}
\mathbf{v} \mathbf{w} & = & (v_1 e_1 + v_2 e_2 + v_3 e_3)(w_1 e_1 + w_2 e_2 + w_3 e_3) \\ \nonumber
& = & v_1 w_1 + v_2 w_2 + v_3 w_3 + (v_2 w_3 - w_2 v_3 ) e_2 e_3 + (v_1 w_3 - v_3 w_1 ) e_1 e_3 + (v_1 w_2 - w_1 v_2 ) e_1 e_2 \\ \nonumber
& = & \textbf{v} \cdot \textbf{w}  + \textbf{v} \wedge \textbf{w}, \nonumber
\eea
consisting of the sum of the dot and wedge products, being a scalar and a bivector respectively.
In three dimensions we in fact have the relation $ \textbf{v} \wedge \textbf{w} = \iGAj \textbf{v} \times \textbf{w} $, where $ \iGAj $ is the trivector and $ \times $ is the vector cross product. For a vector squared, that is $ \textbf{v}^2 = \textbf{v} \textbf{v} $, we have $  \textbf{v} \wedge \textbf{v} = 0 $ and so $ \mathbf{v} \mathbf{v}  = \mathbf{v}  \cdot \mathbf{v}  = v_1^2 + v_2^2 + v_3^2 $ producing a scalar equal to the Pythagorean length squared.

Now, defining $ Z =  a + \iGAj t $ and $ F = \mathbf{v} + \iGAj \mathbf{w} $, we can write $ M = Z + F $, which splits the multivector into a component $ Z $ isomorphic to the complex number field and a multivector $ F $. 

For the multivector $ M $, we then have {\it Clifford conjugation }
\be
\cliffconj{M} = a - \mathbf{v} - \iGAj \mathbf{w} + \iGAj t = Z - F,
\ee
that produces the {\it{amplitude}} of a multivector in three dimensions
\be \label{magnitude3D}
| M | = \sqrt{M \cliffconj{M}} = \sqrt{a^2 - \mathbf{v}^2 + \mathbf{w}^2 - t^2 + 2 \iGAj (a t - \mathbf{v} \cdot \mathbf{w} ) },
\ee
that in general is a complex-like number.  We note that is well behaved with $ |M_1 M_2 |^2 = | M_1 |^2 | M_2 |^2 $ and $ |M_1 M_2 | = \pm | M_1 | | M_2 | $.  

\begin{defn}[Negative square root]
We define the principal square root when acting act over negative reals in $ C\ell \left (\Re^3 \right ) $ as follows: given a real number $ a \in \Re $ we define
\be \label{defnRoot3D}
\sqrt{-a} = e_{123} \sqrt{a} = \iGAj \sqrt{a} .
\ee
\end{defn}
In three dimensions the pseudoscalar $ \iGAj $ is commuting and so closely analogous to the scalar unit imaginary $ \sqrt{-1} $.
The subalgebra, consisting of quantities of the form $ a + \iGAj b $ form an isomorphism with the commuting complex numbers and we can therefore assume the results from complex number theory when restricted to this domain. However, if we allow the complex roots to expand to the domain of a multivector as given in Eq.~(\ref{generalM3D}), we need to solve 
\be
M^2 = a^2 + \mathbf{v}^2 - \mathbf{w}^2 - t^2 + 2 (a \mathbf{v} - t \mathbf{w})+ 2 \iGAj (t \mathbf{v} + a \mathbf{w} ) + 2 \iGAj ( a t + \mathbf{v} \cdot \mathbf{w} ) = c + \iGAj d ,
\ee
where $ c,d \in \Re $.
Solving this equation we find two distinct cases, either $ \mathbf{v} = \mathbf{w} = 0 $ that corresponds to the conventional square root over the complex numbers and $ a = t = 0 $ that provides a different set of roots over the domain of vectors and bivectors. That is, we find $ \left ( \mathbf{v}+ \iGAj \mathbf{w} \right )^2 = c + \iGAj d $, where $ c = \mathbf{v}^2 - \mathbf{w}^2 $ and $ d = 2 \mathbf{v} \cdot \mathbf{w} $. Hence we have an alternative set of roots for complex numbers as
\be
\left (c + \iGAj d \right )^{\frac{1}{2}} =  \mathbf{v}+ \iGAj \mathbf{w} .
\ee
 As a special case we can find for $ c = -1 $ and $ d = 0 $ 
\be
(-1)^{\frac{1}{2}} =  \sinh \theta \hat{w}^{\perp} + \iGAj \cosh \theta \hat{w} , 
\ee
where $ \hat{w} $ is a unit vector and $ \hat{w}^{\perp} $ is a unit vector perpendicular to $ \hat{w} $ and $ \theta \in \Re $. This equation also provides an alternative root of minus one to the trivector in Eq.~(\ref{defnRoot3D}). The investigation of roots within Clifford multivectors has been previously studied\cite{Sangwine2006}, and roots are simpler to analyze using the polar form of a multivector, investigated shortly.

In three dimensions we have the {\it{reversion}} involution
\be
\reversion{M} = a + \mathbf{v} - \iGAj \mathbf{w} - \iGAj t ,
\ee
giving
\be
M \reversion{M} = a^2 + \mathbf{v}^2 + \mathbf{w}^2 + t^2 + 2 (a \mathbf{v} - \iGAj \mathbf{v} \wedge \mathbf{w} + t \mathbf{w} ),
\ee
with the norm $ || M || = \langle M \reversion{M} \rangle_0^{1/2} = \sqrt{ a^2 + \mathbf{v}^2 + \mathbf{w}^2 + t^2} $.
Also when representing complex numbers in three dimensions using $ z = a + \iGAj b $ then the norm produces $ \sqrt{z \reversion{z}} = \sqrt{a^2 + b^2} $.

Now, because  $ M \cliffconj{M} $ is a commuting complex-like number, we can find the inverse multivector to $ M $ as
\be
M^{-1} = \cliffconj{M} /(M \cliffconj{M}),
\ee
which is the same definition as in the two-dimensional case.
The multivector inverse now fails to exist when $ M \cliffconj{M} = 0 $ or when $ a^2  + w^2 =  v^2 + t^2 $ and $ a t = \mathbf{v} \cdot \mathbf{w} $, which we can write as the single condition $ (\textbf{v} + \iGAj \textbf{w})^2 = (a + \iGAj t )^2 $ or $ F^2 = Z^2 $.
The inverse of a vector is a special case of this general multivector inverse, $ \mathbf{v}^{-1} = \mathbf{v}/\mathbf{v}^2 $.  The inverse obeys the relations $ (M^{-1})^{-1} = M $ and $ (M N)^{-1} = N^{-1} M^{-1} $.

Hamilton's quaternions $ {\rm{i}} ,{\rm{j}}, {\rm{k}} $, satisfying $ {\rm{i}}^2 = {\rm{j}}^2 = {\rm{k}}^2 = {\rm{i}} {\rm{j}} {\rm{k}} = -1 $, can be shown to be isomorphic to the even subalgebra of $ C\ell \left (\Re^3 \right ) $, so that a quaternion $ q = a + w_1 {\rm{i}} - w_2 {\rm{j}} + w_3 {\rm{k}} \cong a + \iGAj \mathbf{w} = a + w_1 \iGAj e_1 + w_2 \iGAj e_2 + w_3 \iGAj e_3 $.  Hamilton in fact originally conceived the quaternions as the quotient of two vectors, and indeed using Clifford algebra vectors we can explicate this idea, finding the quotient of two vectors
\be
\mathbf{v}/\mathbf{w} = \mathbf{v} \mathbf{w}/\mathbf{w}^2 = \frac{1}{\mathbf{w}^2 } \left (\textbf{v} \cdot \textbf{w}  + \textbf{v} \wedge \textbf{w} \right )
\ee
that lies on the even subalgebra and so isomorphic to the quaternions as asserted by Hamilton.

\subsubsection*{The square root}

We find that the same expression for square root of a multivector in two dimensions
\be \label{SquareRoot3D}
M^{\frac{1}{2}} = \frac{ M \pm |M| }{\sqrt{M + \cliffconj{M} \pm 2 |M|)}} ,
\ee
produces the square root in three dimensions. The full algebraic analysis of roots in three dimensions is quite extensive, however, as in complex number theory roots are more easily handled using the polar form of a number and we will find that the positive sign above will correspond to the principal value in the polar form $ \rme^{0.5 \log M} $, calculated using logarithms that are defined shortly.

\subsubsection*{Trigonometric form of a multivector}

\begin{defn}[Multivector argument]
We define the argument of a multivector $ \arg M = {\rm{arctan}} \left ( \frac{|F|}{Z} \right ) $, which is a multivalued function modulo $ 2 \pi $, but which also depends on which quadrant the point $ (Z,|F|) $ is in, where both $ Z $ and $ | F | $ are complex-like numbers. 
\end{defn}

Now, a multivector in $ C\ell \left (\Re^3 \right ) $ can be written in the form
\be \label{polarForm3DAssert}
M =  Z + F  = | M | \left ( \cos \phi + \hat{F} \sin \phi \right ) ,
\ee
where $ \phi = \arg M $, provided $ |M|, |F| \ne 0 $. We have defined $ \hat{F} = F/|F| $ that has the key property that $ \hat{F}^2 = -1 $.  This result can be confirmed by substituting $ \phi $, using the fact that $ \cos \phi = \frac{Z}{|M|} $ and $ \sin \phi = \frac{|F|}{|M|} $. Specifically, with $ F =  \mathbf{v} + \iGAj \mathbf{w}$ we find $  | F | = \sqrt{F \cliffconj{F}} = \sqrt{-\mathbf{v}^2 + \mathbf{w}^2 - 2 \iGAj \mathbf{v} \cdot \mathbf{w}}  $. The order of the factors is not as significant in three dimensions compared to two dimensions because the pseudoscalar $ j $ is commuting.

We will then find for integer powers $ p $ that
\be \label{polarFormPowers}
M^p = |M|^p \left ( \cos \phi + \hat{F} \sin \phi \right )^p =  |M|^p \left ( \cos p \phi + \hat{F} \sin p \phi \right ) ,
\ee
an extension of de~Moivre's theorem for multivectors in three dimensions.

\subsubsection*{Exponential map of a 3D multivector}

Now, given a three-dimensional multivector $ a + \mathbf{v} + \iGAj \mathbf{w} + \iGAj t = Z + F $, we find $ F^2 = (\mathbf{v} + \iGAj \mathbf{w})^2 = \mathbf{v}^2 - \mathbf{w}^2 + 2 \iGAj \mathbf{v} \cdot \mathbf{w} = - F \cliffconj{F} = -|F|^2 $. Now using Eq.~(\ref{exponentialMultivector}) and the fact that $ F = \hat{F} | F | $ we find
\bea \label{derivationExpMultivector}
\rme^{ M } = \rme^{ Z + F } & = & \rme^{Z} \rme^{ F } \\ \nonumber
& = & \rme^{Z} \left ( 1 + F + \frac{F^2 }{2!} + \frac{F^3}{3!} + \frac{F^4 }{4!} + \dots  \right ) \\ \nonumber
& = & \rme^{Z} \left ( 1 + \hat{F} | F | - \frac{| F |^2 }{2!}- \frac{ \hat{F} | F |^3}{3!} + \frac{| F |^4 }{4!} + \dots  \right ) \\ \nonumber
& = & \rme^{Z} \left ( \cos | F | + \hat{F}  \sin | F |   \right ) , \nonumber
\eea
and thus in a closed form.
If $ F^2 = 0 $, then referring to the second line of the derivation above, we see that all terms following $ F $ are zero, and so, in this case $ \rme^M = \rme^{ Z + F}  = \rme^{Z} (1+ F) $. The exponential function will also have the expected properties that $ \left (\rme^M \right )^{\cliffconj{}} = \rme^{\cliffconj{M}} $ and likewise for reversion and parity involutions.  A corollary of this result is that $ |\rme^M | = |\rme^Z \rme^F | = |\rme^Z | | \rme^F | = \rme^Z $.

We can thus write quite generally a multivector in polar form 
\be \label{polarForm3D}
M = a + \mathbf{v} + \iGAj \mathbf{w} + \iGAj t  =  | M | \rme^{ \phi \hat{F} }  ,
\ee
where $ \phi = \arg M $, provided $ |M|, |F| \ne 0 $, where clearly the exponent is multivalued. The polar form can also be expanded as $ | M | \rme^{ \phi \hat{F} }  =  | M | \left ( \cos \phi + \hat{F} \sin \phi \right ) $ and so equivalent to the trigonometric form shown in Eq.~(\ref{polarForm3DAssert}). We can therefore write a multivector $ M = \rme^{\log | M|} \rme^{\phi \hat{F} } = \rme^{\log | M| +  \phi \hat{F} } $ and defining the logarithm as the inverse of the exponential function, obtain the logarithm of a multivector 
\be \label{LogMulti}
\log M  = \log |M| + \phi \hat{F} ,
\ee
where $ \phi = \arg M $ and $ \hat{F} = \frac{ \mathbf{v} + \iGAj \mathbf{w}  }{|F|} $, provided $ |M|, |F| \ne 0 $.
Naturally, this will also coincide with the power series expansion of $ \log M = \log (1 + (M-1)) = (M-1) - \frac{1}{2} (M-1)^2 + \frac{1}{3} (M-1)^3 \dots $.
This leads to analogous results, as for complex numbers, that $ \log \iGAj M = \log M + \iGAj \pi/2 $ and $ \log ( -M) = \log M - \pi \hat{F} $. Some properties of the logarithm include $ \log (-1) = \iGAj \pi $ as well as the log of the trivector $ \log \iGAj = \frac{\pi}{2} \iGAj $, $ \log (b \iGAj) = \log b + \frac{\pi}{2} \iGAj $, and the log of a unit vector $ \log e_1 =  \iGAj ( 1 - e_1 ) \frac{\pi}{2}$ generalizing to $ \log \hat{\mathbf{v}} =  \iGAj ( 1 - \hat{\mathbf{v}} ) \frac{\pi}{2}$ and finally for a general vector $ \log \mathbf{v} = \log || \mathbf{v} || + \iGAj ( 1 - \hat{\mathbf{v}} ) \frac{\pi}{2}$.

The multivector logarithm is naturally a generalization of the well known result for quaternions, that can be recovered by setting $ \mathbf{v} = t = 0 $ giving  a multivector $ M = a + \iGAj \mathbf{w} $, with
\be \label{LogMultiQuatnerion}
\log(a + \iGAj \mathbf{w} ) = \log |q| + \phi \iGAj \hat{\mathbf{w}} = \log \sqrt{a^2 + \mathbf{w}^2 } +  {\rm{arctan}} \left ( \frac{\sqrt{\mathbf{w}^2}}{a } \right ) \frac{ \iGAj \mathbf{w}  }{\sqrt{\mathbf{w}^2}}  ,
\ee
where $ \phi =  {\rm{arctan}} \left ( \frac{\sqrt{\mathbf{w}^2}}{a } \right ) $, producing the quaternion logarithm as required. 
If we now set $ e_3 = 0 $ we find
\be \label{LogMultiComplex}
\log(a + w_3 e_{12} ) = \log \sqrt{a^2 + w_3^2 } +  {\rm{arctan}} \left ( \frac{w_3}{a } \right ) \frac{ w_3 e_{12} }{\sqrt{w_3^2} } = \log \sqrt{a^2 + w_3^2 } +  {\rm{arctan}} \left ( \frac{w_3}{a } \right ) e_{12} ,
\ee
the definition of the log of a complex number $ z = a + \iGAi w_3 $.

The nesting of real, complex numbers and quaternions within a multivector can be used to illustrate the Cayley-Dickson construction.  In the Cayley-Dickson construction, complex numbers are generated from pairs of real numbers, and subsequently quaternions are then generated from pairs of complex numbers, etc.

Now, the quaternions are the even subalgebra of $ C\ell \left ( \Re^3 \right ) $ and so we can write a quaternion
\be
q = a + \iGAj \mathbf{w} = \left ( a + w_3 e_{12} \right ) + \left ( w_2 + w_1 e_{12} \right ) e_{31} = z_1 + z_2 e_{31}
\ee
consisting now of a pair of complex-like numbers $ z_1 = a + w_3 e_{12} $ and $ z_2 = w_2 + w_1 e_{12} $.  We can then find the norm $ |q|^2 = \sqrt{|z_1|^2 + |z_2|^2 } $, and so derived from the norm of the constituent complex numbers.  Also, given two quaternions $ p = x_1 + x_2 e_{31} $ and $ q = y_1 + y_2 e_{31} $, where $ x_1, x_2, y_1, y_2 $ are complex like numbers in the form $ a + e_{12} b $, we find their product
\be
p q = (x_1 y_1 - x_2 \cliffconj{y}_2) + (x_1 y_2 + x_2 \cliffconj{y}_1 ) e_{31} .
\ee
This allows us to implement non-commutative quaternion multiplication using only commuting complex number arithmetic, which has advantages in numerical applications that utilize the already efficient implementation of complex number arithmetic.

Also, re-arranging the multivector the multivector in Eq.~(\ref{generalM3D}) to $ \left (a + \iGAj \mathbf{w} \right ) + \iGAj \left ( t - \iGAj \mathbf{v} \right ) = q_1 + \iGAj q_2 $, where $ q_1 = a + \iGAj \mathbf{w} $ and $ q_2 = t - \iGAj \mathbf{v} $ are quaternions, we have now written the multivector as a pair of quaternions. Though this is analogous to the Cayley-Dickson construction that will then produce the octonions, in our case we have formed rather the complexified quaternions, though both being eight dimensional spaces.
Hence in $ C\ell \left (\Re^3 \right ) $ we can identify the full multivector with the field of complexified quaternions, the even subalgebra $ a + \iGAj \mathbf{w} $ with the real quaternions, $ a + \iGAj t $ with the commuting complex numbers and the subalgebra $  a + v_1 e_1 + v_2 e_2 + w_3 e_1 e_2 $ with $ C\ell \left ( \Re^2 \right ) $.

The multivector logarithm highlights both the issue of multivaluedness and the non-commuting nature of multivectors.
Firstly, the non-commutativity implies that $ \rme^A \rme^B \ne \rme^{A+B} $ and hence $ \log A B \ne \log A + \log B $.  Also $ A^{n} B^{n} \ne (A B )^{n} $, unless $ A $ and $ B $ commute, where $ n $ an integer. 

Secondly, the issue of multivaluedness is typically addressed through defining the principle value of the logarithm and the use of Riemann surfaces, however with the multivector logarithm the multivaluedness can expand into two domains, of $ \hat{F} $ and $ \iGAj $.  This is because both $ \iGAj $ and $ \hat{F} $ square to minus one and commute with $ M $.
That is 
\be
M = \rme^{\log M } = \rme^{\log M + 2 n \pi \hat{F} } =  \rme^{\log M + 2 m \pi \iGAj } =  \rme^{\log M + \pi \hat{F} + \pi \iGAj } =  \rme^{\log M + (2 n + 1) \pi \hat{F} + (2 m + 1 )\pi \iGAj } 
\ee
where $ n $ and $ m $ are integers, where we can add even powers of $ \pi $.  Hence $ M = \rme^{\log M } $ whereas $ M \ne \log \left ( \rme^M \right ) $ due to the multivalued nature of the $ \log $ operation. 

Now, we can easily see that for $ n $ an integer that $ M^n = \left (\rme^{\log M} \right )^n = \rme^{\log M} \rme^{\log M} \dots \rme^{\log M} = \rme^{n \log M} $, which can be used as an alternative to Eq.~(\ref{polarFormPowers}).  
This leads us to define the multivector power
\be
M^P = \rme^{ \log ( M ) P } , 
\ee
where the power $ P $ is now generalized to a multivector.   This implies, for example, the power law $ \left ( M^P \right )^n = M^{ n P } $, where $ n \in  \mathbb{Z} $.
With this definition of power we can then define the log of a multivector $ Y $ to the multivector base $ M $ as
\be
\log_M Y =  \frac{1}{\log M}  \log Y .
\ee
Although, if the power $ P $ does not commute with $ \log M $ then we can also define a power as $ \rme^{ P \log ( M ) } $, that has a logarithm $ \log Y /\log M  $.  These expressions however need care due to the multivalued nature of the logarithm operation and the non-commutativity.

Now, using the logarithm function $  \rme^{ 0.5 \log M } $ 
\be
\rme^{ 0.5 (\log | M | + \phi \hat{F} ) } , \, \rme^{ 0.5 (\log | M | + \phi \hat{F} + \pi \hat{F} + \pi \iGAj ) } , 
\ee
we produce the two roots of $ M $ defined in Eq.~(\ref{SquareRoot3D}), as required.

\subsubsection*{Special cases}

We will now consider some special cases where we do have commuting multivectors, such as the case with two multivectors $ M $ and $ Z = a + \iGAj t $.
We then have that $ \log M^Z = Z \log M + \hat{F} m \pi + \iGAj n \pi $, where $ m,n \in  \mathbb{Z} $ add possible phase terms. We can eliminate the phase terms using the exponential function and write a more explicit expression as $ \rme^{\log M^z } = \rme^{z \log M } $.  We also then recover the well known relations that $ \rme^{Z } \rme^{ M } = \rme^{Z + M } $ and $ \log Z M = \log Z + \log M $.

A further special case\cite{Hestenes111} involves the product of two vectors $ \mathbf{a} $ and $ \mathbf{b} $, and we have from Eq.~(\ref{geometricProductRaw}) that
\be
\mathbf{a} \mathbf{b} = \rho \rme^{ \theta \hat{B} } = \rme^{ c + \theta \hat{B} }
\ee
where $ \theta $ is the angle between the two vectors, $ \cos \theta = \hat{\mathbf{a}} \cdot \hat{\mathbf{b}} $, $ c = \log \rho $ and $ \hat{B} $ is the unit bivector of the plane defined by the vectors.
We can then produce the result for two vectors $ \mathbf{a} $ and $ \mathbf{b}  $ that
\be
\log (\mathbf{a} \mathbf{b}) = \log ||\mathbf{a} || + \log ||\mathbf{b} || +  \theta \hat{B} = \frac{1}{2} \log \left ( \mathbf{a}^2 \mathbf{b}^2 \right ) + \theta  \frac{ \mathbf{a} \wedge \mathbf{b}}{|\mathbf{a} \wedge \mathbf{b}| } 
\ee
where $ \hat{B} = \frac{ \mathbf{a} \wedge \mathbf{b}}{|\mathbf{a} \wedge \mathbf{b}| }$ is the unit bivector formed by $ \mathbf{a} \wedge \mathbf{b} $ and $ \theta = \arcsin \frac{|\mathbf{a} \wedge \mathbf{b}|}{||\mathbf{a} || ||\mathbf{b} ||} $ is the angle between the two vectors.

\subsection*{Linear equations and linear functions}

We define a linear function over multivector variables
\be \label{LinearFnsDefn}
F(M) = \sum_{m = 1}^n R_m M S_m ,
\ee
where  $ R_m, R_m, M $ are multivectors. The series cannot in general be simplified due to non-commutativity. The case of $ n = 1 $, giving $ F(M) =  R M S  $ is particularly useful. For example, for the special case where $ R $ and $ S $ are vectors we have a reflection of a multivector
\be
M' = - \mathbf{v} M \mathbf{v} .
\ee
When $ R $ and $ S $ lie in the even subalgebra, isomorphic to the quaternions we have a rotation operation in three dimensions
\be
M' = R M \reversion{R} ,
\ee
where $ R \reversion{R} = 1 $.  The quaternions form a division algebra and so they are suitable to use as rotation operators that require an inverse.
There is also a generalization to describe rotations in $ \Re^4 $,
\be
M' = R M S ,
\ee
where $ M = x e_1 + y e_2 + z e_3 + \iGAj t $ represent a 4D Cartesian vector, with $ R \reversion{R} = S \reversion{S} = 1 $ .  

For the second case from Eq.~(\ref{LinearFnsDefn}) with $ n = 2 $ we have  $ F(M) = R M S + P M Q $. Now, premultiplying by $ S^{-1} $ from the right and $ P^{-1} $ from the left we produce $ Y = P^{-1} F(M) S^{-1} = P^{-1} R M + M Q S^{-1} $. Setting $ A = P^{-1} R $ and $ B = Q S^{-1} $ we produce
\be
Y = A M + M B,
\ee
which is called Sylvester's equation\cite{janovska2008linear} that can in general be solved for $ M $.  Assuming $ | A | \ne 0 $ (or alternatively $ | B | \ne 0 $) we firstly calculate $ A^{-1} Y \cliffconj{B} + Y = M \cliffconj{B} + A^{-1} M B \cliffconj{B} + A M + M B = M (B + \cliffconj{B}) + A^{-1} M B \cliffconj{B} + A M  $.  
Now $ B + \cliffconj{B} $ and $ B \cliffconj{B} $ are commuting complex-like numbers and so we can write $  \left ( \cliffconj{B} + B + A^{-1} B \cliffconj{B} + A \right ) M  = A^{-1} Y \cliffconj{B} + Y $, thus succeeding in isolating the unknown multivector $ M $.  
Hence we have the solution
\be
M = \left (B + \cliffconj{B} + A^{-1} B \cliffconj{B} + A  \right )^{-1}  \left ( A^{-1} Y \cliffconj{B} + Y \right ) .
\ee
This result is analogous to results using quaternions or matrices\cite{janovska2008linear}, though solved here for a general multivector, applicable in one, two or three dimensions. 

Regarding polynomial equations in multivectors, the fundamental theorem of algebra tells us that the number of solutions of a complex polynomial is equal to the order of the polynomial.  With multivector polynomials however, such as the simple quadratic equation $ M^2 + 1 = 0 $ we can find an infinite number of solutions.

A common operation in complex number theory is the process of `rationalizing the denominator' for a complex number $ \frac{1}{a + \iGAi b } $ that involves producing a single real valued denominator, given by $ \frac{a - \iGAi b}{a^2 + b^2} $. We can also duplicate this process for a multivector $ \frac{1}{a + \mathbf{v} + \iGAj \mathbf{w} + \iGAj t } $.  Now, $ 1/M = M^{-1} = \cliffconj{M}/(M \cliffconj{M}) $.  
Notice that $ M \cliffconj{M} $ is a complex-like number that we can now `rationalize' by multiplying the numerator and the denominator by $ \reversion{\left (M \cliffconj{M} \right )} $ forming $ \frac{1}{M} = \frac{\cliffconj{M} \reversion{\left (M \cliffconj{M} \right )}}{R} $, where $ R = M \cliffconj{M} \reversion{\left (M \cliffconj{M} \right )} $ is a scalar real value, as required. 

\subsubsection*{Trigonometric functions of multivectors in 3D}

The trigonometric functions in three-dimensions are more straightforward than in two-dimensions, because the unit imaginary $ j = e_{123} $ is commuting.  Using the general expressions in Eq.~(\ref{HyperTrigN}) and using $ M = Z + F $, we can once again write these expression in a closed form
\bea
\cosh M & = & \frac{1}{2} \left ( \rme^{ Z + F} +  \rme^{- Z -F} \right ) = \cos |F| \cosh Z  + \hat{F} \sin |F| \sinh Z  \\ \nonumber
\sinh M & = & \frac{1}{2} \left ( \rme^{ Z + F} -  \rme^{- Z -F} \right ) = \cos |F| \sinh Z + \hat{F} \sin |F| \cosh Z  .\nonumber
\eea
Using the commuting trivector $ \iGAj $, we can now write the trig relations given in the general case in Eq.~(\ref{TrigN}), as
\bea
\cos M & = & \frac{1}{2} \left ( \rme^{\iGAj M} +  \rme^{-\iGAj M} \right ) = \cosh |F| \cos Z  - \hat{F} \sinh |F| \sin Z   \\ \nonumber
\sin M & = & \frac{1}{2 \iGAj } \left ( \rme^{\iGAj M} -  \rme^{-\iGAj M} \right )  =  \cosh |F| \sin Z +  \hat{F} \sinh |F| \cos Z  . \nonumber
\eea
All the usual identities will hold such as $ \sinh \iGAj M = \iGAj \sin M $ and $ \cosh \iGAj M = \cos M $.  
Also, we can see that $ \sin M $ and $ \cos M $ are commuting, and so we can define without any difficulties with non-commutativity
\be
\tan M = \frac{\sin M}{\cos M } = \frac{ \tan Z + \hat{F} \tanh |F| }{1 - \hat{F} \tanh |F| \tan Z } ,
\ee
These expressions are easily calculated because both $ Z $ and $ |F | $ are complex-like numbers and so we can utilize the well known results from complex number theory.

We also have the results that for a general vector $ \mathbf{v} $ that $ \cos \mathbf{v} = \cos \sqrt{\mathbf{v}^2} = \cos ||\mathbf{v}|| $ that neatly generalizes scalar values to vector values.  That is, the $ \cos $ of a vector is the $ \cos $ of the length of the vector though $ \sin \mathbf{v} = \hat{\mathbf{v}} \sin || \mathbf{v} || $. For a field $ F = \mathbf{E} + \iGAj \mathbf{B} $ we have $ \cos F = \cos \sqrt{F^2} $.  

\subsubsection*{Inverse trigonometric functions}

Now, using the result that $ \rme^M = \cosh M + \sinh M $ substituting $ M = {\rm{arcsinh}} \, X $ then taking the $ \log $ of both sides we find
\be
{\rm{arcsinh}} \, X = \log \left (  \cosh \left ( {\rm{arcsinh}} \,X \right ) + X \right ).
\ee
Re-arranging $ \cosh^2 X - \sinh^2 X = 1 $ we find that $ \cosh \left ({\rm{arcsinh}} \, X \right ) = (1+X^2)^{1/2} $ and so 
\be
{\rm{arcsinh}} X = \log \left ( (1+X^2)^{1/2} + X \right ).
\ee
This will coincide with the power series $ {\rm{arcsinh}} \, X = \sum_{n=0}^{\infty} \frac{(-)^n (2n)!}{2^{2n} (n!)^2} \frac{X^{2n+1}}{2n+1} $. Similarly we find $ {\rm{arccosh}} X = \log \left ( X + (X^2 - 1)^{1/2} \right ) $ and $ {\rm{arctanh}} \, X  = \frac{1}{2} \log \left( (1 + X )/(1 - X) \right )  =  \frac{1}{2} \left ( \log \left(1 + X \right ) - \log \left (1 - X \right ) \right ) $ that coincides with the conventional power series $ {\rm{arctanh}} \, X = \sum_{n=0}^{\infty} \frac{X^{2n+1}}{2n+1} $.

Similarly, from the definitions of $ \cos $ and $ \sin $ we know that 
\be
\rme^{\iGAj M } = \cos M + \iGAj \sin M ,
\ee
and once again substituting  $ M = {\rm{arcsin}} \, X $ and using $ \cos \left ({\rm{arcsin}} \, X \right ) = (1-X^2)^{1/2} $ we find 
\be
{\rm{arcsin}} X = - \iGAj \log \left ( (1-X^2)^{1/2} + \iGAj X \right ).
\ee
Similarly we have $ {\rm{arccos}} X = - \iGAj \log \left ( X + \iGAj (1 - X^2 )^{1/2} \right ) $.

Hence we produce the result that $ {\rm{arcsinh}} \left ( \iGAj X \right ) = \iGAj {\rm{arcsin}} X $, $ {\rm{arccosh}} X = \iGAj {\rm{arccos}} X $ and $ {\rm{arctanh}} \left ( \iGAj X \right ) = \iGAj {\rm{arctan}} X $. Therefore, finally
\be
 {\rm{arctan}} \, X  = -\frac{\iGAj}{2} \log \left( (1 + \iGAj X )/(1 - \iGAj X) \right ) = -\frac{\iGAj}{2} \left ( \log \left( 1 + \iGAj X \right ) - \log \left (1 - \iGAj X \right ) \right ) .
\ee

As an example of solving multivector trigonometric equations, if we are asked to solve the equation $ \sinh M = 0 $ then we can proceed as follows.  Given $ \sinh M = \frac{1}{2} \left ( \rme^{M} - \rme^{- M} \right )  $ we therefore need to solve $ \rme^{M} = \rme^{- M} $ or $ \rme^{2 M} = 1 $.  Now
\be
 \rme^{2 M} = \rme^{2 \left ( a + \iGAj t \right ) + 2 \left ( \mathbf{v} + \iGAj \mathbf{w} \right )} = \rme^{2 a } \left ( \cos 2 t + \iGAj \sin 2 t \right ) \left ( \cos 2 |F| + \hat{F} \sin 2 |F| \right ) = 1 .
\ee
Hence we require $ a = 0 $, $ t = n \pi $ and $ | F | = m \pi $ so we have a solution $ M = m \pi \hat{F} + n \pi \iGAj $, where $ m,n \in \mathbb{Z}  $.  We can also identify a second solution in which both terms in the brackets are simultaneously negative $ M = \left (m + \frac{1}{2} \right ) \pi \hat{F} + \left ( n + \frac{1}{2} \right )\pi \iGAj $.

\subsubsection*{Inter-relationships in $ C \ell \left (\Re^3 \right ) $}

We have the well known result from complex number theory that $ \iGAi^{\iGAi} = \rme^{-\pi/2 } $ that is duplicated with the pseudoscalar in Clifford algebra, finding  $ \iGAi^{\iGAi} = \iGAj^{\iGAj} = \rme^{-\pi/2 } $. However with a more general multivector number now available we can also find other more general relationships.
For example, for a unit vector $ \hat{v} $ with $ \hat{v}^2 = 1  $, we find that $ \hat{v}^{\hat{v}} = \hat{v} $.  That is raising a unit vector to this unit vector power produces the same unit vector.  Alternatively, if we raise a unit vector to an orthogonal unit vector we find  $ \hat{v}^{\hat{v}^{\perp}} = 1 $.  

Also, consider the expression $ \left ( \cos \theta + \iGAj \sin \theta \right )^{\mathbf{v}} $, where $ \mathbf{v} = v_1 e_1 + v_2 e_2 + v_3 e_3 $ is a Cartesian vector, with $ \hat{v} = \mathbf{v}/\sqrt{\mathbf{v}^2} = \mathbf{v}/s $, then we find
\be
\left (\cos \theta +  \iGAj \sin \theta  \right )^{\mathbf{v}} = \rme^{ \mathbf{v} \log \left (\cos \theta + \iGAj \sin \theta  \right ) }= \rme^{  s \theta \iGAj  \hat{v}  } = \cos s \theta + \iGAj  \hat{v} \sin s \theta .
\ee
Now $ \hat{q} = \cos s \theta + \iGAj \hat{v} \sin s \theta  $ lies in the even sub-algebra and so is isomorphic to the quaternions with $ |q| = 1 $ and  $ \hat{z} = \cos \theta + \iGAj \sin \theta $ is isomorphic to the complex numbers, with $ \hat{z} $ representing a unit complex number.  We thus can write
\be \label{RosettaStone}
r \hat{z}^{\mathbf{v}} = q .
\ee
This formula thus links real numbers $ r \in \Re $, complex numbers $ z \in {\mathbb{C}} $, Cartesian vectors $ \mathbf{v} \in \Re^3 $ and quaternions $ q \in {\mathbb{H}} $ into a single relationship, a Rosetta stone for the algebra of three-dimensional space. 
Interpreting this formula, we can see that raising a unit complex number to a vector power produces a quaternion. A unit complex number being a rotation operator in the plane with a rotation of $ \theta $, when raised to a unit vector power in the direction $ \hat{v} $ produces a rotation operator rotating $ 2 \theta $ about the axis $ \hat{v} $.  Hence raising a complex number to a vector power $ \hat{v} $ converts a planar rotation operator into a three dimensional rotation operator about an axis $ \hat{v} $.
This relates to our previous discussion on the Cayley-Dickson construction that generates quaternions from complex numbers, but illustrates an alternate construction to achieve this.

These results are summarized in Table~~\ref{Rosetta}.

\begin{table*}[ht]
	\centering
\renewcommand{\arraystretch}{1.3}
\caption{Algebraic relations in three dimensions $ C\ell(\Re^3) $}  \label{Rosetta}

\begin{tabular}{|l|l|}
\hline
Main results  & Notes: $ \iGAj =e_{123}, \iGAi =e_{12} $, $ \mathbf{v} \in \Re^3 $  \\
\hline \hline
$ \iGAj^\iGAj = \iGAi^\iGAi = \rme^{-\pi/2} $ & Compare with $ \sqrt{-1}^{\sqrt{-1}} = \rme^{-\pi/2}  $ \\ 
$ (\iGAj \hat{v})^{\iGAj \hat{v}} = \rme^{-\pi/2} $ &  $  \hat{v}^2 = 1. $ E.g. $ (\iGAj e_1)^{\iGAj e_1} =  (e_2 e_3)^{e_2 e_3} = \rme^{-\pi/2} $ \\
$ (\iGAj \hat{v})^{\iGAj \hat{v}^{\perp}} = \iGAj \hat{w} $ & $ \hat{v}^{\perp} \cdot \hat{v} = 0 $, $ \hat{w} \hat{v}^{\perp} \hat{v} = \iGAj $. E.g. $ (\iGAj e_1)^{\iGAj e_3} = \iGAj e_2 $ \\ \hline
Powers of vectors &   $  \hat{v}^2 = 1 $  \\ \hline
$ \hat{v}^{\hat{v}} = \hat{v} $ & E.g. $ e_1^{e_1} = e_1 $ \\
$ \hat{v}^{\hat{v}^{\perp}} = 1 $ & E.g. $ e_1^{e_2} = e_2^{e_1} = 1 $  \\ 
$ {\hat{v}}^{ \iGAj \hat{v}^{\perp} } = 1 $ & E.g. $ e_2^{\iGAj e_3} = 1 $ \\ 
$ \mathbf{v}^{\frac{1}{2}}= \frac{1}{\sqrt{2 j \sqrt{\mathbf{v}^2}}} (\mathbf{v} + \iGAj \sqrt{\mathbf{v}^2} ) $ & E.g. $ e_1^{\frac{1}{2}}= \frac{1}{\sqrt{2 j }} (e_1 + \iGAj ) = \frac{1}{2} (1 - \iGAj ) (e_1 + \iGAj ) $\\ \hline
Trigonometric relationships &  \\ \hline
$ \cos \mathbf{v} = \cos || \mathbf{v} || $, $ \cos \hat{\mathbf{v}} = 1 $ & \\ 
$ {\rm{arcsinh}} \mathbf{v} = \log \left ( \mathbf{v} + \left (1+\mathbf{v}^2 \right )^{1/2} \right ) $ & \\ \hline
General relationships &  \\ \hline
$ r \hat{z}^{\mathbf{v}} = q $ & $ r \in \Re $, $ z \in {\mathbb{C}} $, $ |\hat{z}| = 1 $, $ q \in {\mathbb{H}} $ and $ \mathbf{v} \in \Re^3 $   \\ \hline
Special cases &  \\ \hline
$ \left ({\iGAj \hat{v}} \right )^{ \hat{v} } = \iGAj $ & E.g. $ ( \iGAj  e_3)^{e_3} =  ( e_1 e_2)^{e_3} =  \iGAj  $ \\ 
$ \iGAj^{\hat{v}} = j \hat{v} $ & E.g. $ \iGAj^{e_3} = \iGAj e_3 $ \\ \hline
\end{tabular}
\end{table*}

\subsection*{Multivector in one and four dimensions}

We can extend the sequence $ C\ell \left (\Re^3 \right ) $, $ C\ell \left (\Re^2 \right ) $ down to one dimension giving the multivector in $ C\ell \left (\Re^1 \right ) $
\be
M = a + v e_1
\ee
where $ a,v \in \Re $.  We now do not have a pseudoscalar, however most functions are still available.  It should also be noted that all multivectors are now commuting because we only have a single algebraic variable $ e_1 $.
The list of functions can be deduced from the expressions for two or three dimensions through setting $ e_2 = e_3 = 0 $, that then gives $ |F| = | v e_1 | = \iGAj v $ and $ \hat{F} = - \iGAj e_1 $. 

The four dimensional case $ C\ell \left (\Re^4 \right ) $ is significantly harder than three dimensions, due to a larger sixteen dimensional space as well as a non-commuting pseudoscalar $ I = e_{1234} $.  We have a multivector $ M = a + \mathbf{v} + \mathbf{B} + I \mathbf{w} + I t $, where $ \mathbf{v},\mathbf{w} \in  \Re^4  $ and the bivectors $ \mathbf{B} = \sum_{i,j=1}^4 b_{ij} e_i e_j $. We have Clifford conjugation $ \cliffconj{M} = a - \mathbf{v} - \mathbf{B} + I \mathbf{w} + I t $ as well as a new involution $ M^{\sharp} = a - \mathbf{v} + \mathbf{B} - I \mathbf{w} - I t $.  We can then find a multivector amplitude $ | M | = \left (M \cliffconj{M} \left (M \cliffconj{M} \right )^{\sharp} \right )^{1/4}  $ that allows us to find an inverse $ M^{-1} = \cliffconj{M} (M \cliffconj{M})^{\sharp} /|M|^4 $ provided  $ | M | \ne 0 $. 

If we seek the next space that has a commuting pseudoscalar that squares to minus one we need to go to a seven dimensional space.  This space consists of eight grades with a total of $ 2^7 = 128 $ elements.
The difficulties with spaces other than dimension of three, serves to illustrate the elegant mathematical framework that $ C \ell \left (\Re^3 \right ) $ provides.

\newpage

\section*{Results}

\begin{table*}[ht]
	\centering
\renewcommand{\arraystretch}{1.3}
\caption{Multivector functions in two dimensions $ M \in C\ell \left (\Re^2 \right ) $}  \label{twoDFunctions}
\begin{tabular}{|l|l|}
\hline
Main results  & Notes ($ \iGAi = e_{12} $ non-commuting)   \\
\hline \hline
$ M = a + \mathbf{v} + \iGAi b = a + F $ & Define $ \mathbf{v} \in  \Re^2  $, $ a,b \in  \Re $, $ F = \mathbf{v} + \iGAi b $  \\
$ \cliffconj{M} = a - \mathbf{v} - \iGAi b = a - F $ & Conjugation  \\
$ | M | = \sqrt{M \cliffconj{M}} = \sqrt{a^2 - \mathbf{v}^2 + b^2  } $ & Amplitude    \\
$ M^{-1} = \cliffconj{M} /(M \cliffconj{M}) $ : $ | M | \ne 0 $ & Inverse \\ 
$ \phi = \arg M = {\rm{arctan}} \left ( \frac{|F|}{a} \right ) $ &  Argument, $ | F | = \sqrt{b^2 - \mathbf{v}^2  } $ \\ 
$ M = \left ( \cos \phi +  \hat{F} \sin \phi \right ) |M| $ : $ | M |, |F| \ne 0 $  &  Trigonometric form, $ \hat{F} = F/|F| $ \\ 
$ M^p =  \left ( \cos p \phi + \hat{F} \sin p \phi \right ) |M|^p $ &  Integer powers,  $ p \in {\mathbb{N}}  $ \\ 
$ \rme^M  = \rme^{a} \left ( \cos | F | + \hat{F}  \sin | F |   \right )  $ & Exponential, $\rme^M  \equiv \Sigma_{n=0}^{\infty}  \frac{M^{n}}{n!} $ \\ 
   & If $ |F| = 0 $ then $ \rme^M = \rme^a (1+ F) $\\ \hline
$ M^{\frac{1}{2}} =  \frac{ M \pm |M| }{\sqrt{M + \cliffconj{M} \pm 2 |M|}} $ : $ | M | \in \Re $ & Square root \\ 
$ M =  | M | \rme^{\hat{F} \phi  } $ : $ | M | \in \Re \ne 0 $ & Polar form, $ \hat{F}^2 = -1 $ \\
$ M^{x} = |M|^{x} \left ( \cos  x \phi +\hat{F} \sin x \phi \right ) $ : $ | M | \in \Re $ &  Real powers,  $ x \in \Re $    \\
$ \log_e M = \log_e |M|  +  \hat{F} \phi $  & Logarithm  \\ 
$ M^P = \rme^{ \log ( M )  P } $ or $ \rme^{ P \log ( M ) } $ &  General powers \\ \hline
Hyperbolic trigonometric functions  &   $ M = a + F =  a+ \hat{F} |F| $  \\ \hline
 $ \cosh M  = \cos |F| \cosh a  + \hat{F} \sin |F| \sinh a  $ &  $ \rme^M = \cosh M + \sinh M $  \\ 
 $ \sinh M  = \cos |F| \sinh a + \hat{F} \sin |F| \cosh a $ &  $ \cosh^2 M - \sinh^2 M = 1 $ \\ \hline
 $ {\rm{arcsinh}} \, M  = \log \left (M + \left (1 + M^2 \right )^{\frac{1}{2}} \right ) $ & Inverse hyperbolic sin \\
 $ {\rm{arccosh}} M = \log \left ( M + (M^2 - 1)^{1/2} \right ) $ &  Inverse hyperbolic cos  \\ \hline
\end{tabular}
\end{table*}

\newpage

\begin{table*}[ht]
	\centering
\renewcommand{\arraystretch}{1.3}
\caption{Multivector functions in three dimensions $ C\ell \left (\Re^3 \right ) $}  \label{threeDFunctions}
\begin{tabular}{|l|l|}
\hline
Main results  & Notes ($ \iGAj =e_{123} $ commuting) \\
\hline \hline
$ M = a + \mathbf{v} + \iGAj \mathbf{w} + \iGAj t $ & $ F = \mathbf{v} + \iGAj \mathbf{w} $, $ Z = a + \iGAj t $, $ \mathbf{v},\mathbf{w} \in  \Re^3  $ \\
$ \cliffconj{M} = a - \mathbf{v} - \iGAj \mathbf{w} + \iGAj t $ & Conjugation \\
$ | M | = \sqrt{M \cliffconj{M}} = \sqrt{a^2 - \mathbf{v}^2 + \mathbf{w}^2 - t^2 + 2 \iGAj (a t - \mathbf{v} \cdot \mathbf{w} ) } $ &  Amplitude \\
$ M^{-1} = \cliffconj{M} /(M \cliffconj{M}) $ : $ | M | \ne 0 $ &  Inverse \\ \hline
$ \phi = \arg M = {\rm{arctan}} \left ( \frac{|F|}{Z} \right ) $ & Argument \\
$ M =  | M | \rme^{ \phi \hat{F} } =  | M | \left ( \cos \phi + \hat{F} \sin \phi \right )  $ : $ | M |,|F| \ne 0 $ & Polar form, $ \hat{F} = F/|F| $, $ \hat{F}^2 = -1 $ \\
$ M^{\frac{1}{2}} =  \frac{ M \pm |M| }{\sqrt{M + \cliffconj{M} \pm 2 |M|}} $ & Square root  \\ 
$ M^{x} = |M|^{x} \left ( \cos  x \phi +\hat{F} \sin x \phi \right ) $ & Complex powers $ x \in \Re \oplus \bigwedge^3 \Re^3 $    \\
$ \rme^M = \rme^{Z + F} = \rme^{Z} \left ( \cos | F | + \hat{F}  \sin | F |   \right ) $ & Exponential \\
 & If $ |F| = 0 $ then $ \rme^M = \rme^Z (1+ F) $\\
$ \log_e M = \log_e |M| + \phi \hat{F}  $ & Logarithm \\
$ M^P = \rme^{ \log ( M )  P } $ or $ \rme^{ P \log ( M ) } $ &  General powers  \\ \hline
Hyperbolic/Trigonometric functions  &   $ M = Z + F =  Z+ \hat{F} |F| $  \\ \hline
 $ \cosh M = \cos |F| \cosh Z  + \hat{F} \sin |F| \sinh Z  $ &   $ \rme^{ M }= \cosh M + \sinh M $  \\
 $ \sinh M = \cos |F| \sinh Z + \hat{F} \sin |F| \cosh Z $ &  $ \cosh^2 M - \sinh^2 M = 1 $ \\
 $ \cos M =  \cosh \iGAj M = \cosh |F| \cos Z - \hat{F} \sinh |F| \sin Z  $ & $ \rme^{\iGAj M }= \cos M + \iGAj \sin M $ \\
 $ \sin M = - \iGAj \sinh \iGAj M = \cosh |F| \sin Z +  \hat{F} \sinh |F| \cos Z $ &   $ \cos^2 M + \sin^2 M = 1 $ \\ \hline
 $ {\rm{arcsinh}} \, M  = \log \left (M + \left (1 + M^2 \right )^{\frac{1}{2}} \right ) $ &  $ {\rm{arcsinh}} \left ( \iGAj M \right ) = \iGAj {\rm{arcsin}} M $ \\
 $ {\rm{arccosh}} M = \log \left ( M + (M^2 - 1)^{1/2} \right ) $ &  $ {\rm{arccosh}} X = \iGAj {\rm{arccos}} X $ \\
 $ {\rm{arctanh}} \, M  = \frac{1}{2} \log \left( (1 + M )/(1 - M) \right ) $ &  $ {\rm{arctanh}} \left ( \iGAj M \right ) = \iGAj {\rm{arctan}} M  $ \\
\hline 
\end{tabular}
\end{table*}

\newpage


\section*{Discussion}

In this paper, we explore the elementary functions when generalized to act over the space of Clifford multivectors in two and three dimensions, refer Table~\ref{twoDFunctions} and Table~\ref{threeDFunctions}.  Two key points that need to be kept in mind when working with multivectors, is firstly their non-commutativity and secondly multivaluedness, as found with the  $ \log $ function and the square root functions, for example.  According to Frobenius' theorem the only associative division algebras are the reals, complex numbers and quaternions and conveniently these form subalgebras within $ C\ell \left (\Re^3 \right ) $. For the multivector, as represented in Eq.~(\ref{generalM3D}), the reals form the scalar component $ a $, the complex-like numbers represented by the scalar and trivector components $ a + \iGAj t $ and the quaternions by the even subalgebra $ a + \iGAj \mathbf{w} $. The full multivector in three dimensions, on the other hand, do not form a division algebra as the inverse operation is not defined for the multivectors with zero amplitude. 

We have used the symbols $ \iGAi $ and $ \iGAj $ to replace the unit imaginary in two and three dimensions respectively. The use of the bivector and trivector for this purpose allows us to duplicate imaginary quantities with real algebraic entities thus remaining within a real space. Also the quantities $ \iGAi = e_1 e_2  $ and $ \iGAj = e_1 e_2 e_3 $ can be endowed with specific geometrical meaning as a unit area and a unit volume respectively.

We find that in two and four dimensions  with a non-commuting pseudoscalar the elementary functions lack generality, whereas in three dimensions with a commuting pseudoscalar the functions are defined more generally.  For example, in three dimensions the exponential form (polar decomposition) exists for all multivectors provided $ |M|,|F| \ne 0 $, whereas in two dimensions, they only exist if $ |M| $ is real. In fact, in three dimensions with a commuting pseudoscalar allows us to identify $ C\ell \left (\Re^3 \right ) $ with a complex algebra, isomorphic to complexified quaternions.

We find that because the complex numbers and quaternions appear as subalgebras within the more general multivector, we can explore their mutual inter-relationships within this context.  We find that we can link the three algebraic systems of vectors, quaternions and complex numbers into a single expression, finding that a complex number raised to a vector power produces a quaternion, as shown in Eq.~(\ref{RosettaStone}).  A relationship between complex numbers and quaternions is already provided by the Cayley-Dickson construction of quaternions from complex numbers, however our expression is more explicit generating quaternions from raising a complex number to the power of a Cartesian vector.  Inspecting the list of functions we also identify a single formula that produces the square root, amplitude and inverse in two and three dimensions. Also vectors are given a much more versatile formulation in Clifford algebra compared to Gibbs formulation of vectors, and indeed we can explore various vector expressions, such as raising a vector to a vector power, as well as logarithms and trigonometric relationships with vectors, these and other relationships listed in Table~~\ref{Rosetta}.
 We also find that the elementary functions can be defined using a single involution of Clifford conjugation, although for convenience two other involutions of reversion and space inversion are also defined.  Naturally, the elementary functions over complex numbers and quaternions can be recovered as special cases from the three dimensional case.

The two dimensional algebra has the even subalgebra isomorphic to the complex numbers and has application within planar geometry, such as planar waveguides, and the three-dimensional algebra has the obvious application to three-dimensional space and forms an elegant space due to the commuting pseudoscalar, as well as possessing complex numbers and quaternions as subalgebras that can be utilized for rotations and containing Cartesian vectors that perform reflections.  

The multivector can also provide a unifying perspective on physical phenomenon in three dimensions.
For example, the Schr{\"o}dinger wave equation defines a wave function over the complex field that was extended by Pauli to include spin, generalizing the wave function to a quaternion.  Dirac completed this process producing the full relativistic wave equation for spin, that generalized the quaternionic wave function of Pauli to an eight-dimensional wave function, isomorphic to the eight-dimensional multivector\cite{Boudet} in $ C\ell \left ( \Re^3 \right ) $, as shown in Eq.~(\ref{generalM3D}). All three of these wave functions are subalgebras within the multivector in three dimensions, thus providing a unified picture. This example serves to illustrate the value of detailing the functions over multivector variables as undertaken in this paper.


\section*{Acknowledgments}


%
%
%



\section*{Figure Legends}
%

\section*{Tables}
%
%
%

\section*{Supporting Information Legends}
%
%

\end{document}